\theoremstyle{plain}
\newtheorem{corollary}{Corollary}
\newtheorem{definition}{Definition}
\newtheorem{proposition}{Proposition}
\newtheorem{remark}{Remark}
\newtheorem{theorem}{Theorem}
\numberwithin{equation}{section}
\begin{document}
\title[Some properies of F-harmonic maps]{Some properties of F-harmonic maps}
\author{Mohammed Benalili}
\address{Dept. of Mathematics, Facult\'{e} des Sciences, Universit\'{e}
Abou-Belka\"{\i}d, Tlemcen}
\email{m\_benalili@mail.univ-tlemcen.dz}
\author{Hafida Benallal}
\email{h\_benallal@mail.univ-tlemcen.dz}
\subjclass[2000]{Primary 58E20, 53C43.}
\keywords{F-harmonic maps, Morse index.}

\begin{abstract}
In this note, we investigate estimates of the Morse index for $F$-harmonic
maps into spheres, our results extend partially those obtained in (\cite{14}%
) and (\cite{15}) for harmonic and $p$-harmonic maps.
\end{abstract}

\maketitle

\bigskip

\section{\protect\bigskip Introduction}

Harmonic maps have been studied first by J. Eells and J.H.Sampson in the
sixties and since then many works were done ( see \cite{4}, \cite{9}, \cite%
{13}, \cite{16}, \cite{17}, \cite{21}) to cite a few of them. Extensions to
notions of $p$-harmonic, biharmonic, $F$-harmonic and $f$-harmonic maps were
introduced and similar research has been carried out (see \cite{1}, \cite{2}%
, \cite{3}, \cite{5}, \cite{12}, \cite{15}, \cite{18}, \cite{20}). Harmonic
maps were applied to broad areas in sciences and engineering including the
robot mechanics ( see \cite{6}, \cite{8} )$.$

The Morse index for harmonic maps, $p$-harmonic maps, as well as biharmonic
maps, into a standard unit Euclidean sphere $S^{n}$ has been widely
considered ( see \cite{12}, \cite{14}, \cite{15},).

In this paper for a $C^{2}$-function $F:\left[ 0,+\infty \right[ \rightarrow %
\left[ 0,+\infty \right[ $ such that $F^{\prime }(t)>0$ on $t\in \left]
0,+\infty \right[ $, we consider the Morse index for $F$-harmonic maps into
spheres. Our results generalize partial estimates of the Morse index
obtained in (\cite{14}) and (\cite{15}) for harmonic and $p$-harmonic maps.

Let $(M,g)$ be a compact Riemannian manifold of dimension $m\geq 2$, $S^{n%
\text{ }}$the unit $n$-dimensional Euclidean sphere with $n\geq 2$ endowed
with the canonical metric $can$ induced by the inner product of $R^{n+1}$.

For a $C^{1}$- application $\phi :(M,g)\longrightarrow (S^{n},can)$, we
define the $F$-energy functional by, 
\begin{equation*}
E_{F}(\phi )=\int_{M}F\left( \frac{\left\vert d\phi \right\vert ^{2}}{2}%
\right) dv_{g}
\end{equation*}%
where $\frac{\left\vert d\phi \right\vert ^{2}}{2}$ denotes the energy
density given by 
\begin{equation*}
\frac{\left\vert d\phi \right\vert ^{2}}{2}=\frac{1}{2}\sum_{i=1}^{m}\left%
\vert d\phi (e_{i})\right\vert ^{2}
\end{equation*}%
and where $\left\{ e_{i}\right\} $ is an orthonormal basis on $T_{x}$ $M$
and $dv_{g}$ is the Riemannian measure associated to $g$ on $M$.

Let $\phi ^{-1}TS^{n}$ be the pullback vector fiber bundle of $TS^{n}$, $%
\Gamma \left( \phi ^{-1}TS^{n}\right) $ the space of sections on $\phi
^{-1}TS^{n}$ and denote by $\nabla ^{M}$, $\nabla ^{S^{n}}$and $\tilde{\nabla%
}$ Levi-Civita connections on $TM$, $T$ $S^{n}$ and $\phi ^{-1}TS^{n}$
respectively. $\tilde{\nabla}$ is defined by \ 
\begin{equation*}
\tilde{\nabla}_{X}Y=\nabla _{\phi _{\ast }X}^{S^{n}}Y
\end{equation*}%
where $X\in TM$ and $Y\in \Gamma \left( \phi ^{-1}TS^{n}\right) $.

Let $v$ be a vector field on $S^{n}$ and $\left( \phi _{t}^{v}\right) _{t}$
the flow of diffeomorphisms induced by $v$ on $S^{n}$ i.e. 
\begin{equation*}
\phi _{0}^{v}=\phi \ \ \text{, \ \ }\frac{d}{dt}\phi _{t}^{v}\mid _{t=0}=v%
\text{.}
\end{equation*}%
The first variation formula of $E_{F}(\phi )$ is given by%
\begin{equation*}
\frac{d}{dt}E_{F}(\phi _{t})\mid _{t=0}=\int_{M}F^{\prime }\left( \frac{%
\left\vert d\phi _{t}\right\vert ^{2}}{2}\right) \left\langle \nabla
_{\partial t}d\phi _{t},d\phi _{t}\right\rangle \left\vert _{t=0}\right.
dv_{g}
\end{equation*}%
\begin{equation*}
=-\int_{M}\left\langle v,\tau _{F}(\phi )\right\rangle dv_{g}
\end{equation*}%
where $\tau _{F}(\phi )=trace_{g}\nabla \left( F^{\prime }\left( \frac{%
\left\vert d\phi \right\vert ^{2}}{2}\right) d\phi \right) $ denotes the
Euler-Lagrange equation of the $F$-energy functional $E_{F}$. Remark that if 
$\left\vert d\phi \right\vert _{\phi ^{-1}TN}$ is constant then $\phi $ is
harmonic if and only if $\phi $ is $F$-harmonic.

\begin{definition}
$\phi $ is called $F$-harmonic if and only if $\tau _{F}(\phi )=0$ i.e. $%
\phi $ is a critical point of $\ F$-energy functional $E_{F}$.
\end{definition}

The second variation of $E_{F}$ is given as 
\begin{equation*}
\frac{d^{2}}{dt^{2}}E_{F}(\phi _{t})\mid _{t=0}=\frac{d}{dt}\int_{M}\frac{d}{%
dt}F\left( \frac{\left\vert d\phi _{t}\right\vert ^{2}}{2}\right) \left\vert
_{t=0}\right. dv_{g}
\end{equation*}%
\begin{equation*}
=\int_{M}\left[ F^{\prime \prime }\left( \frac{\left\vert d\phi \right\vert
^{2}}{2}\right) \left\langle \nabla v,d\phi _{t}\right\rangle ^{2}+F^{\prime
}\left( \frac{\left\vert d\phi \right\vert ^{2}}{2}\right) \left\vert \nabla
v\right\vert ^{2}\right] dv_{g}
\end{equation*}%
\begin{equation*}
-\int_{M}\left\langle \nabla _{\partial t}\frac{\partial \phi _{t}}{\partial
t}\left\vert _{t=0}\right. ,trace_{g}\nabla \left( F^{\prime }\left( \frac{%
\left\vert d\phi \right\vert ^{2}}{2}\right) d\phi \right) \right\rangle
dv_{g}
\end{equation*}%
\begin{equation*}
-\int_{M}F^{\prime }\left( \frac{\left\vert d\phi \right\vert ^{2}}{2}%
\right) \sum_{i=1}^{m}\left\langle R^{S^{n}}\left( v,d\phi (e_{i})\right)
d\phi (e_{i}),v\right\rangle dv_{g}
\end{equation*}%
and since $\phi $ is $F$-harmonic, $\tau _{F}(\phi )=0$, then%
\begin{equation*}
\frac{d^{2}}{dt^{2}}E_{F}(\phi _{t})\mid _{t=0}=\int_{M}F^{\prime \prime
}\left( \frac{\left\vert d\phi \right\vert ^{2}}{2}\right) \left\langle
\nabla v,d\phi \right\rangle ^{2}dv_{g}+
\end{equation*}%
\begin{equation}
\int_{M}F^{\prime }\left( \frac{\left\vert d\phi \right\vert ^{2}}{2}\right) %
\left[ \left\vert \nabla v\right\vert ^{2}-\sum_{i=1}^{m}\left\langle
R^{S^{n}}\left( v,d\phi (e_{i})\right) d\phi (e_{i}),v\right\rangle \right]
dv_{g}\text{.}  \label{4}
\end{equation}

Along this paper we consider variation in directions of vector fields of the
subspace $\pounds (\phi )$ of $\Gamma (\phi ^{-1}TS^{n})$ defined by 
\begin{equation*}
\pounds (\phi )=\left\{ \bar{v}\circ \phi ,v\in 
\mathbb{R}
^{n+1}\right\}
\end{equation*}%
where $\bar{v}$ is a vector field on $S^{n}$ given by $\bar{v}%
(y)=v-\left\langle v,y\right\rangle y$ for any $y\in S^{n}$; it is known
that $\bar{v}$ is a conformal vector field on $S^{n}$. Obviously, if $\phi $
is not constant, $\pounds (\phi )$ is of dimension $n+1$.

\section{Morse index for $F$-harmonic application}

For any vector field $v$ on $S^{n}$ along $\phi $, we associate the
quadratic form 
\begin{equation*}
Q_{\phi }^{F}(v)=\frac{d^{2}}{dt^{2}}E_{F}(\phi _{t})\mid _{t=0}\text{.}
\end{equation*}

The Morse index of the $F$-harmonic map is defined as the positive integer

\begin{equation*}
Ind_{F}(\phi )=\sup \left\{ \dim N,N\subset \Gamma (\phi )\text{ such that }%
Q_{\phi }^{F}\left( v\right) \text{ negative defined on }N\right\}
\end{equation*}%
where $N$ is a subspace of $\Gamma (\phi )$. The Morse index measures the
degree of the instability of $\phi $ which is called $F$- stable if $%
Ind_{F}(\phi )=0$. Let also $S_{g}^{F}(\phi )$ be the $F$-stress-energy
tensor defined by%
\begin{equation}
S_{g}^{F}(\phi )=F^{\prime }(\frac{\left\vert d\phi \right\vert ^{2}}{2}%
)\left\vert d\phi \right\vert ^{2}g-2\left( F^{\prime }(\frac{\left\vert
d\phi \right\vert ^{2}}{2})+F^{\prime \prime }\left( \frac{\left\vert d\phi
\right\vert ^{2}}{2}\right) \frac{\left\vert d\phi \right\vert ^{2}}{2}%
\right) \phi ^{\ast }can\text{.}  \label{4'}
\end{equation}%
For $x\in M$, we put 
\begin{equation*}
S_{g}^{o,F}(\phi )=\inf \left\{ S_{g}^{F}(\phi )(X,X)\text{, }X\in T_{x}M%
\text{ such that }g(X,X)=1\right\} \text{.}
\end{equation*}%
The tensor $S_{g}^{F}(\phi )$ will be called positive ( resp. positive
defined) at $x$ if $\ S_{g}^{o,F}(\phi )\geq 0$ (resp. $S_{g}^{o,F}(\phi )>0$
).

\begin{remark}
$F(t)=\frac{1}{p}\left( 2t\right) ^{\frac{p}{2}}$, with $p\in \left[
2,+\infty \right[ $, $S_{g}^{p}\left( \phi \right) $ is the stress-energy
tensor introduced by Eells and Lemaire for $p=2$ ( \cite{9})or El Soufi for $%
p\geq 4$, (\cite{13}).
\end{remark}

In this note we state the following result

\begin{theorem}
\label{theo} Let $\phi $ be an $F$-harmonic map from a compact $m-$%
Riemannian manifold $\left( M,g\right) $ ($m\geq 2$) into the Euclidean
sphere $S^{n}$ ($n\geq 2$). Suppose that the $F$-stress-energy tensor $%
S_{g}^{F}\left( \phi \right) $ of $\phi $ is positive defined. Then the
Morse index of $\phi $, $Ind_{F}(\phi )\geq n+1$.
\end{theorem}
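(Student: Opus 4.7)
\emph{Proof plan.}
I will show that $Q_\phi^F$ is negative definite on the $(n+1)$-dimensional subspace $\pounds(\phi)\subset\Gamma(\phi^{-1}TS^n)$, which immediately gives $\mathrm{Ind}_F(\phi)\geq n+1$. First, $\phi$ cannot be constant, since otherwise $d\phi\equiv 0$ and $S_g^F(\phi)\equiv 0$ would contradict positive definiteness; thus $\dim\pounds(\phi)=n+1$.

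For $v\in\mathbb{R}^{n+1}$ set $V_v=\bar v\circ\phi$ and $H_v=\langle v,\phi\rangle$. The conformality identity $\nabla^{S^n}_Y\bar v=-\langle v,y\rangle Y$ gives $\tilde\nabla_X V_v=-H_v\,d\phi(X)$. Substituting this together with $R^{S^n}(X,Y)Z=\langle Y,Z\rangle X-\langle X,Z\rangle Y$ into (\ref{4}), and using $|V_v|^2=|v|^2-H_v^2$, yields
\[
Q_\phi^F(V_v)=\int_M F''\left(\frac{|d\phi|^2}{2}\right)H_v^2|d\phi|^4\,dv_g+\int_M F'\left(\frac{|d\phi|^2}{2}\right)\left[(2H_v^2-|v|^2)|d\phi|^2+|\nabla H_v|^2\right]dv_g.
\]

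Next I would exploit $F$-harmonicity. Viewing $\phi$ as a map into the ambient $\mathbb{R}^{n+1}$ gives $\Delta H_v=\langle v,\tau(\phi)\rangle-|d\phi|^2 H_v$; combining with the equation $F'\tau(\phi)+d\phi(\nabla F')=0$ produces $\mathrm{div}(F'\nabla H_v)=-F'|d\phi|^2 H_v$, and integration by parts yields $\int_M F'|\nabla H_v|^2\,dv_g=\int_M F'|d\phi|^2 H_v^2\,dv_g$. The positivity hypothesis on $S_g^F(\phi)$ is equivalent to the pointwise inequality $(2F'+F''|d\phi|^2)|d\phi(X)|^2<F'|d\phi|^2|X|^2$ for every nonzero $X\in T_xM$. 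Diagonalizing $\phi^{\ast}can$ locally by an orthonormal frame $\{\xi_k\}$ with eigenvalues $\{\lambda_k\}$, and expanding $v$ in a dual orthonormal basis $\{f_k\}$ of $\mathbb{R}^{n+1}$ with $f_0=\phi$ so that $v^0=H_v$, one has $|\nabla H_v|^2=\sum_k\lambda_k(v^k)^2\leq\lambda_{\max}(|v|^2-H_v^2)$; combined with the $S^F$-bound on $\lambda_{\max}$ this gives the pointwise strict inequality $(2F'+F''|d\phi|^2)|\nabla H_v|^2<F'|d\phi|^2(|v|^2-H_v^2)$, strict on a set of positive measure since $\phi$ is non-constant.

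Integrating this bound, using the $F$-harmonicity identity to replace $\int F'|\nabla H_v|^2$ by $\int F'|d\phi|^2 H_v^2$, and substituting into the expression for $Q_\phi^F(V_v)$ yields $Q_\phi^F(V_v)<\int_M F''|d\phi|^2\left(H_v^2|d\phi|^2-|\nabla H_v|^2\right)dv_g$. The main obstacle is controlling this residual. For $p$-harmonic maps it vanishes identically because $F''|d\phi|^2=(p-2)F'$ and the $F$-harmonicity identity gives $\int F'(H_v^2|d\phi|^2-|\nabla H_v|^2)\,dv_g=0$; for general $F$ the route is to rewrite $Q_\phi^F(V_v)$ via the pointwise eigenframe decomposition as
\[
Q_\phi^F(V_v)=\int_M F''|d\phi|^2(H_v^2|d\phi|^2-|\nabla H_v|^2)\,dv_g-\int_M\sum_{k\geq 1}S_g^F(\xi_k,\xi_k)(v^k)^2\,dv_g,
\]
and to exploit that the second (strictly negative) term dominates the first under the full $S_g^F(\phi)>0$ hypothesis, giving $Q_\phi^F(V_v)<0$ for every nonzero $v$ and hence $\mathrm{Ind}_F(\phi)\geq n+1$.
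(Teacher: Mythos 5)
Your computations are correct as far as they go, and they follow essentially the paper's route: the conformal identity $\tilde\nabla_X(\bar v\circ\phi)=-\phi_v\,d\phi(X)$, the constant-curvature formula for $R^{S^n}$, the divergence identity $\operatorname{div}(F'\nabla\phi_v)=-F'|d\phi|^2\phi_v$ obtained from $\tau_F(\phi)=0$, and the pointwise reading of $S_g^F(\phi)>0$ as $\left(2F'+|d\phi|^2F''\right)|d\phi(X)|^2<F'|d\phi|^2|X|^2$ are exactly the ingredients behind the paper's inequality (\ref{6}). The problem is that your argument does not close. In your final decomposition
\begin{equation*}
Q_\phi^F(V_v)=\int_M F''|d\phi|^2\left(H_v^2|d\phi|^2-|\nabla H_v|^2\right)dv_g-\int_M\sum_{k\geq 1}S_g^F(\xi_k,\xi_k)(v^k)^2\,dv_g
\end{equation*}
the first integral has no sign: the $F$-harmonicity identity kills $\int_M F'\left(H_v^2|d\phi|^2-|\nabla H_v|^2\right)dv_g$, not the $F''|d\phi|^2$-weighted version, and the integrand $F''|d\phi|^2\left(H_v^2|d\phi|^2-|\nabla H_v|^2\right)$ can be positive on large sets (wherever $F''$ and $H_v^2|d\phi|^2-|\nabla H_v|^2$ have the same sign). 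Positive definiteness of $S_g^F$ bounds each $\left(2F'+F''|d\phi|^2\right)\lambda_k$ above by $F'|d\phi|^2$, but it gives no lower bound on the $S^F$-term relative to this residual. ``Exploit that the second term dominates the first'' is therefore not a step of a proof; it is precisely the assertion that needs proving, and you give no mechanism for it.

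For comparison, the paper disposes of this residual by writing the second variation on $\pounds(\phi)$ as
\begin{equation*}
Q_\phi^F(v)=2\int_M\left(\frac{|d\phi|^2}{2}F''+F'\right)|d\phi_v|^2\,dv_g-\int_M F'|d\phi|^2\left\vert\bar v\circ\phi\right\vert^2dv_g,
\end{equation*}
that is, by substituting $|d\phi_v|^2$ for $\phi_v^2|d\phi|^2$ under the full weight $F''|d\phi|^2+F'$ — equivalently, by taking your residual integral to be zero — and then concluding directly from (\ref{6}) that $Q_\phi^F(v)\leq-2\int_M S_g^{0,F}(\phi)|\bar v|^2dv_g<0$. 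That substitution is immediate for $F(t)=\frac{1}{p}(2t)^{p/2}$, where $F''|d\phi|^2=(p-2)F'$ and the weighted identity reduces to the unweighted one, which is why the $p$-harmonic case of El Soufi--Lejeune works; you correctly notice this. To complete your proof you must either justify the weighted identity $\int_M\left(F''|d\phi|^2+F'\right)\left(\phi_v^2|d\phi|^2-|d\phi_v|^2\right)dv_g=0$ for the class of $F$ under consideration, or produce an independent estimate of the residual in terms of $\int_M\sum_k S_g^F(\xi_k,\xi_k)(v^k)^2dv_g$; as written, neither is done, so the key final inequality $Q_\phi^F(V_v)<0$ is not established.
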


\begin{proof}
Let $w=\bar{v}\circ \phi \in \pounds (\phi )$ and put $\left\langle v,\phi
\right\rangle =\phi _{v}$. For any point $x\in M$, we denote respectively by 
$w^{T}(x)$ and $w^{\perp }(x)$ the tangential and normal components of the
vector $w(x)$ on the spaces $d\phi (T_{x}M)$ and $d\phi (T_{x}M)^{\perp }$.
Let also $\left\{ e_{1},...,e_{m}\right\} $ an orthonormal basis of $T_{x}M$
which diagonalizes $\phi ^{\ast }can$ and such that $\left\{ d\phi
(e_{1}),d\phi (e_{2}),...,d\phi (e_{l})\right\} $ forms a basis of $d\phi
(T_{x}M)$.

If $\left( F^{\prime }\left( \frac{\left\vert d\phi \right\vert ^{2}}{2}%
\right) +\frac{\left\vert d\phi \right\vert ^{2}}{2}F^{\prime \prime }\left( 
\frac{\left\vert d\phi \right\vert ^{2}}{2}\right) \right) \neq 0$ at the
point $x$, then%
\begin{equation*}
\left\vert \overline{v}^{T}(x)\right\vert ^{2}=\sum_{i=1}^{l}\left\vert
d\phi (e_{i})\right\vert ^{-2}\left\langle \overline{v}(x),d\phi
(e_{i})\right\rangle ^{2}
\end{equation*}%
on the other hand, for any $i\leq l$, we have%
\begin{equation*}
2\left( F^{\prime }\left( \frac{\left\vert d\phi \right\vert ^{2}}{2}\right)
+\frac{\left\vert d\phi \right\vert ^{2}}{2}F^{\prime \prime }\left( \frac{%
\left\vert d\phi \right\vert ^{2}}{2}\right) \right) \left\vert d\phi
(e_{i})\right\vert ^{2}=\left\vert d\phi \right\vert ^{2}F^{\prime }(\frac{%
\left\vert d\phi (x)\right\vert ^{2}}{2})
\end{equation*}%
\begin{equation}
-S_{g}^{F}(\phi )(x)(e_{i},e_{i})\leq \left\vert d\phi \right\vert
^{2}F^{\prime }(\frac{\left\vert d\phi (x)\right\vert ^{2}}{2}%
)-S_{g}^{o,F}(\phi )(x)  \label{5}
\end{equation}%
so 
\begin{equation*}
\left( \left\vert d\phi \right\vert ^{2}F^{\prime }(\frac{\left\vert d\phi
(x)\right\vert ^{2}}{2})-S_{g}^{o,F}(\phi )(x)\right) \left\vert \overline{v}%
^{T}(x)\right\vert ^{2}\geq 
\end{equation*}%
\begin{equation*}
2\left( F^{\prime }\left( \frac{\left\vert d\phi \right\vert ^{2}}{2}\right)
+\frac{\left\vert d\phi \right\vert ^{2}}{2}F^{\prime \prime }\left( \frac{%
\left\vert d\phi \right\vert ^{2}}{2}\right) \right)
\sum_{i=1}^{l}\left\langle \overline{v}(x),d\phi (e_{i})\right\rangle ^{2}
\end{equation*}%
and since,%
\begin{equation*}
\left\langle \overline{v}(x),d\phi (e_{i})\right\rangle ^{2}=\left\langle
v-\left\langle v,\phi \right\rangle \phi ,d\phi (e_{i})\right\rangle ^{2}
\end{equation*}%
\begin{equation*}
=\left\langle v,d\phi (e_{i})\right\rangle ^{2}=\left\vert d\phi
_{v}(e_{i})\right\vert ^{2}
\end{equation*}%
we get 
\begin{equation*}
\left( \left\vert d\phi \right\vert ^{2}F^{\prime }(\frac{\left\vert d\phi
(x)\right\vert ^{2}}{2})-S_{g}^{o,F}(\phi )(x)\right) \left\vert
w^{T}(x)\right\vert ^{2}\geq 2\left( F^{\prime }\left( \frac{\left\vert
d\phi \right\vert ^{2}}{2}\right) +\frac{\left\vert d\phi \right\vert ^{2}}{2%
}F^{\prime \prime }\left( \frac{\left\vert d\phi \right\vert ^{2}}{2}\right)
\right) \left\vert d\phi _{v}(x)\right\vert ^{2}\text{.}
\end{equation*}%
Now, taking into account (\ref{5}), we infer that%
\begin{equation*}
2\left( F^{\prime }\left( \frac{\left\vert d\phi \right\vert ^{2}}{2}\right)
+\frac{\left\vert d\phi \right\vert ^{2}}{2}F^{\prime \prime }\left( \frac{%
\left\vert d\phi \right\vert ^{2}}{2}\right) \right) \left\vert d\phi
_{v}(x)\right\vert ^{2}-\left\vert d\phi \right\vert ^{2}F^{\prime }(\frac{%
\left\vert d\phi (x)\right\vert ^{2}}{2})\left\vert \overline{v}\right\vert
^{2}
\end{equation*}%
\begin{equation*}
\leq -\left\vert d\phi \right\vert ^{2}F^{\prime }(\frac{\left\vert d\phi
(x)\right\vert ^{2}}{2})\left\vert \overline{v}^{N}\left( x\right)
\right\vert -S_{g}^{o,F}(\phi )(x)\left\vert \overline{v}^{T}(x)\right\vert
^{2}
\end{equation*}%
\begin{equation}
\leq -S_{g}^{o,F}(\phi )(x)\left\vert \overline{v}(x)\right\vert ^{2}
\label{6}
\end{equation}%
Now the second variation writes as 
\begin{equation*}
\frac{d^{2}}{dt^{2}}E_{F}(\phi _{t})\mid _{t=0}=\int_{M}F^{\prime \prime
}\left( \frac{\left\vert d\phi \right\vert ^{2}}{2}\right) \left\langle
\nabla \overline{v},d\phi \right\rangle ^{2}dv_{g}
\end{equation*}%
\begin{equation*}
+\int_{M}F^{\prime }\left( \frac{\left\vert d\phi \right\vert ^{2}}{2}%
\right) \left[ \left\vert \nabla \overline{v}\right\vert ^{2}-\left\vert
d\phi \right\vert ^{2}\left\vert \overline{v}\right\vert ^{2}+\left\vert
d\phi _{v}\right\vert ^{2}\right] dv_{g}
\end{equation*}%
Consequently, we have%
\begin{equation*}
Q_{\phi }^{F}(v)=2\int_{M}\left( \frac{\left\vert d\phi \right\vert ^{2}}{2}%
F^{\prime \prime }\left( \frac{\left\vert d\phi \right\vert ^{2}}{2}\right)
+F^{\prime }\left( \frac{\left\vert d\phi \right\vert ^{2}}{2}\right)
\right) \left\vert d\phi _{v}\right\vert ^{2}dv_{g}
\end{equation*}%
\begin{equation*}
-\int_{M}F^{\prime }\left( \frac{\left\vert d\phi \right\vert ^{2}}{2}%
\right) \left\vert d\phi \right\vert ^{2}\left\vert \overline{v}o\phi
\right\vert ^{2}dv_{g}
\end{equation*}%
and taking account of the inequality (\ref{6}), we get that 
\begin{equation*}
Q_{\phi }^{F}(v)\leq -2\int_{M}S_{g}^{0,F}(\phi )\left\vert \overline{v}%
\right\vert ^{2}dv_{g}\text{.}
\end{equation*}%
Finally since $S_{g}^{0,F}(\phi )$ is positive defined, it follows that $%
Q_{F}$ is negative defined on $\pounds (\phi )$. Hence%
\begin{equation*}
Ind_{_{F}}(\phi )\geq n+1\text{.}
\end{equation*}
\end{proof}

\section{Morse index of particular $F$-harmonic maps}

\subsection{Stability of the identity map}

In this section we borrow ideas from \cite{12} to show the stability of the
identity map. Let $\left( M,g\right) $ be a compact manifold and consider
the identity $I$ on $M$ which is obviously $F$-harmonic, the second
variation formula of $I$ writes as%
\begin{equation*}
Q_{I}^{F}\left( v\right) =F^{\prime \prime }\left( \frac{m}{2}\right)
\sum_{i=1}^{m}\int_{M}\left\langle \nabla _{e_{i}}v,e_{i}\right\rangle
^{2}dv_{g}+
\end{equation*}%
\begin{equation}
F^{\prime }\left( \frac{m}{2}\right) \int_{M}\left[ \left\vert \nabla
v\right\vert ^{2}-Ric_{M}\left( v,v\right) \right] dv_{g}\text{.}  \label{7}
\end{equation}%
If $L_{v}$ denotes the Lie derivative in the direction of $v$, the Yano's
formula \cite{22} leads to%
\begin{equation}
\int_{M}\left[ \left\vert \nabla v\right\vert ^{2}-Ric_{M}\left( v,v\right) %
\right] dv_{g}=\int_{M}\left[ \frac{1}{2}\left\vert L_{v}g\right\vert
^{2}-\left( div\left( v\right) \right) ^{2}\right] dv_{g}\text{.}  \label{8}
\end{equation}%
Now if $\left( e_{i}\right) _{i}$ is an orthonormal basis on $M$ which
diagonalizes $L_{v}g$ we obtain as in \cite{12} that 
\begin{equation}
\left\vert L_{v}g\right\vert ^{2}\geq \frac{4}{m}\left( div(v)\right) ^{2}
\label{9}
\end{equation}%
therefore by (\ref{7}), (\ref{8}) and (\ref{9}) we infer that%
\begin{equation}
Q_{I}^{F}\left( v\right) \geq \frac{1}{m}\left( F^{\prime \prime }\left( 
\frac{m}{2}\right) +\left( 2-m\right) F^{\prime }\left( \frac{m}{2}\right)
\right) \int_{M}div(v)^{2}dv_{g}\text{.}  \label{10}
\end{equation}%
We deduce the following proposition:

\begin{proposition}
Let $\left( M,g\right) $ be a compact Riemannian manifold of dimension $%
m\geq 3$. Suppose that%
\begin{equation}
F^{\prime \prime }\left( \frac{m}{2}\right) +\left( 2-m\right) F^{\prime
}\left( \frac{m}{2}\right) \geq 0\text{.}  \label{11}
\end{equation}%
The identity map $I$ on $M$ is $F$-stable.
\end{proposition}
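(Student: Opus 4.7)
The plan is to harvest directly from inequality (\ref{10}), which has already been derived in the paragraphs leading up to the proposition; after that, the conclusion is essentially immediate from the hypothesis.

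First I would verify that the identity map is itself $F$-harmonic, so that the simplified second variation formula is legitimate. Since $|dI|^{2}\equiv m$ is constant on $M$, the coefficient $F^{\prime}(|dI|^{2}/2)=F^{\prime}(m/2)$ is a positive constant, and the Euler--Lagrange operator $\tau_{F}(I)$ reduces to $F^{\prime}(m/2)\,\tau(I)$. The classical tension field $\tau(I)$ vanishes, so $I$ is a critical point of $E_{F}$, and the expressions (\ref{4}) and (\ref{7}) apply without correction terms.

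Next, for an arbitrary vector field $v$ on $M$, I would simply quote inequality (\ref{10}):
\[
Q_{I}^{F}(v)\;\geq\;\frac{1}{m}\Bigl(F^{\prime\prime}(m/2)+(2-m)F^{\prime}(m/2)\Bigr)\int_{M}(\operatorname{div} v)^{2}\,dv_{g}.
\]
The hypothesis (\ref{11}) makes the bracket nonnegative, and $(\operatorname{div} v)^{2}\geq 0$ pointwise, hence $Q_{I}^{F}(v)\geq 0$ for every $v\in \Gamma(TM)$. By the definition of $\operatorname{Ind}_{F}(I)$ as the supremum of dimensions of subspaces on which $Q_{I}^{F}$ is negative definite, a nonnegative quadratic form admits no such subspace, which forces $\operatorname{Ind}_{F}(I)=0$; that is, $I$ is $F$-stable.

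I do not anticipate a real obstacle here, since the main analytic ingredients have already been assembled upstream: Yano's integral identity (\ref{8}) converts the Bochner-type integrand $|\nabla v|^{2}-\operatorname{Ric}_{M}(v,v)$ into $\tfrac{1}{2}|L_{v}g|^{2}-(\operatorname{div} v)^{2}$, and the pointwise trace inequality (\ref{9}) (obtained by diagonalizing $L_{v}g$ and applying Cauchy--Schwarz to $\operatorname{div} v=\tfrac{1}{2}\operatorname{tr}(L_{v}g)$) is what lets the $F^{\prime}(m/2)$ piece recombine with the $F^{\prime\prime}(m/2)$ piece into the single coefficient appearing in (\ref{10}). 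The one point I would double-check when writing out the proof is the sign bookkeeping: since $F^{\prime}(m/2)>0$ and $2-m\leq -1$, the hypothesis (\ref{11}) forces $F^{\prime\prime}(m/2)>0$, which is precisely what is needed for the $F^{\prime\prime}$ term in (\ref{7}) to combine with (\ref{9}) in the direction claimed.
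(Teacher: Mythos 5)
Your proposal is correct and follows exactly the route the paper takes: the paper's ``proof'' is precisely the derivation of (\ref{10}) from (\ref{7}), (\ref{8}) and (\ref{9}) in the paragraph preceding the proposition, after which the hypothesis (\ref{11}) gives $Q_{I}^{F}(v)\geq 0$ and hence $Ind_{F}(I)=0$. One tiny correction to your closing remark: the positivity of $F^{\prime\prime}(m/2)$ (which indeed follows from (\ref{11})) is not needed to apply (\ref{9}) --- that step only multiplies (\ref{9}) by the positive constant $F^{\prime}(m/2)$ --- but it is what makes the coefficient $\frac{1}{m}F^{\prime\prime}(m/2)$ in (\ref{10}) a legitimate weakening of the sharper constant $F^{\prime\prime}(m/2)$ coming directly from (\ref{7}).
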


\begin{remark}
$F(t)=\frac{1}{2-m}e^{\left( 2-m\right) t}+C$ ,where $C$ $\geq \frac{1}{m-2}$%
is a constant, fulfills the condition (\ref{11}).
\end{remark}

\subsection{Morse index of the identity map}

Now we are interested by the identity map $I$ on $M$. Let $C$ and $K$ denote
the space of conformal vector fields and the space of Killing vector fields
on $M$ respectively.

\begin{proposition}
\label{prop1} Let $(M,g)$ be a compact $m$-dimensional manifold ( $m\geq 3$%
). Suppose that%
\begin{equation}
\frac{m-2}{m}F^{\prime }\left( \frac{m}{2}\right) -F^{\prime \prime }\left( 
\frac{m}{2}\right) >0  \label{11'}
\end{equation}%
then $Ind_{F}\left( I\right) \geq \dim \left( C/K\right) $.
\end{proposition}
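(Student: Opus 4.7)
The plan is to mimic the strategy of \cite{12}: evaluate $Q_I^F$ on a complement of the Killing subspace inside the conformal subspace and exploit the fact that for conformal vector fields both estimates that went into the stability Proposition (the Cauchy--Schwarz step behind $\sum_i\langle\nabla_{e_i}v,e_i\rangle^2\ge\tfrac{1}{m}(\mathrm{div}\,v)^2$ and the Yano inequality (\ref{9})) become \emph{equalities}. Thus the lower bound (\ref{10}) turns into an exact identity, and the reversal of sign in (\ref{11'}) relative to (\ref{11}) will force $Q_{I}^{F}<0$.

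Concretely, I would start from the identity (\ref{7}) rewritten via Yano's formula (\ref{8}):
\[
Q_I^F(v)=F''\!\left(\tfrac{m}{2}\right)\sum_{i=1}^m\int_M\langle\nabla_{e_i}v,e_i\rangle^2\,dv_g+F'\!\left(\tfrac{m}{2}\right)\int_M\Bigl[\tfrac12|L_vg|^2-(\mathrm{div}\,v)^2\Bigr]dv_g.
\]
Restrict to $v\in C$. A conformal vector field satisfies $L_vg=\tfrac{2\,\mathrm{div}\,v}{m}g$, so in any orthonormal basis one has $\langle\nabla_{e_i}v,e_i\rangle=\tfrac{\mathrm{div}\,v}{m}$ and $|L_vg|^2=\tfrac{4}{m}(\mathrm{div}\,v)^2$. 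Plugging these equalities in collapses both terms onto $\int_M(\mathrm{div}\,v)^2\,dv_g$ and yields
\[
Q_I^F(v)=\frac{1}{m}\Bigl(F''(\tfrac{m}{2})+(2-m)F'(\tfrac{m}{2})\Bigr)\int_M(\mathrm{div}\,v)^2\,dv_g.
\]

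Under (\ref{11'}) the bracket is strictly negative: since $F''(\tfrac{m}{2})<\tfrac{m-2}{m}F'(\tfrac{m}{2})$, one has $F''(\tfrac{m}{2})-(m-2)F'(\tfrac{m}{2})<-\tfrac{(m-2)(m-1)}{m}F'(\tfrac{m}{2})<0$, because $F'(\tfrac{m}{2})>0$ and $m\ge 3$. Therefore $Q_I^F(v)\le 0$ on $C$, with equality if and only if $\mathrm{div}\,v\equiv 0$. Now a conformal vector field is Killing precisely when its divergence vanishes identically, so the kernel of $Q_I^F|_C$ is exactly $K$. Choose any linear complement $W$ of $K$ in $C$; then $\dim W=\dim(C/K)$ and $Q_I^F$ is negative definite on $W$, giving $\mathrm{Ind}_F(I)\ge\dim(C/K)$.

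There is no substantial obstacle here; the only bookkeeping point is to observe that the two inequalities used to produce the lower bound (\ref{10}) in the stability proposition both saturate when $v$ is conformal, which is what converts the one-sided estimate into the exact identity that drives the conclusion.
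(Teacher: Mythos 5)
Your argument is correct and is essentially the paper's own proof: both restrict $Q_I^F$ to conformal fields, use $L_vg=\frac{2}{m}\mathrm{div}(v)\,g$ to collapse Yano's identity into a single multiple of $\int_M(\mathrm{div}\,v)^2\,dv_g$ whose coefficient is negative under (\ref{11'}), and then identify the kernel of the restricted form with the Killing fields to conclude $Ind_F(I)\geq\dim(C/K)$. The only (immaterial) difference is the coefficient of $F''\left(\frac{m}{2}\right)$: you read the first term of (\ref{7}) literally as a sum of squares, which for conformal $v$ gives $\frac{1}{m}F''$, whereas the paper's (\ref{12}) replaces it by $(\mathrm{div}\,v)^2$ and gets $F''$; both resulting coefficients are negative under (\ref{11'}) (as you check explicitly), so the conclusion is unaffected.
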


\begin{proof}
Plugging (\ref{7}) in (\ref{8}), we get%
\begin{equation*}
Q_{I}^{F}\left( v\right) =F^{\prime \prime }\left( \frac{m}{2}\right)
\int_{M}div\left( v\right) ^{2}dv_{g}+
\end{equation*}%
\begin{equation}
F^{\prime }\left( \frac{m}{2}\right) \int_{M}\left[ \frac{1}{2}\left\vert
L_{v}g\right\vert ^{2}-div\left( v\right) ^{2}\right] dv_{g}  \label{12}
\end{equation}%
and if $v$ is a conformal vector field on $M$ then ( see the proof of
Theorem2 in \cite{15} )%
\begin{equation}
L_{v}g=-\frac{2}{m}div(v)g  \label{13}
\end{equation}%
where $m=\dim (M)$. So (\ref{12}) becomes%
\begin{equation*}
Q_{I}^{F}\left( v\right) =\left( F^{\prime \prime }\left( \frac{m}{2}\right)
+\frac{2-m}{m}F^{\prime }\left( \frac{m}{2}\right) \right) \int_{M}div\left(
v\right) ^{2}dv_{g}\text{.}
\end{equation*}%
If $\frac{m-2}{m}F^{\prime }\left( \frac{m}{2}\right) -F^{\prime \prime
}\left( \frac{m}{2}\right) >0$, then%
\begin{equation*}
Q_{I}^{F}\left( v\right) \leq 0.
\end{equation*}%
The equality holds if $div(v)=0$ which means by (\ref{13}) that $v$ is a
Killing vector field. Then on the quotient space $C/K$, we have 
\begin{equation*}
Q_{I}^{F}\left( v\right) <0
\end{equation*}%
i.e. 
\begin{equation*}
Ind_{F}\left( I\right) \geq \dim (C/K)\text{.}
\end{equation*}
\end{proof}

\begin{remark}
$F(t)=\frac{m}{m-2}e^{\frac{m-2}{m}t}+Ct$ ,where $C$ $>0$ is a constant,
fulfills the condition (\ref{11'}).
\end{remark}

\subsection{Morse index of the homothetic map}

Let $\phi :\left( M,g\right) \rightarrow \left( N,h\right) $ be a homothetic
map i.e. $\phi ^{\ast }h=k^{2}g$ where $k\in R$. Clearly $\left\vert d\phi
\right\vert _{h}^{2}=mk^{2}$, where $m=\dim (M)$, in that case the $F$%
-tension $\tau _{F}\left( \phi \right) $ is proportional to the mean
curvature of $\phi $ so $\phi $ is $F$-harmonic if and only if $\phi $ is
minimal immersion.

\begin{proposition}
\label{prop2} Let $\phi :\left( M,g\right) \rightarrow \left( N,h\right) $
be an $F$- harmonic homothetic map. Then we have%
\begin{equation*}
Ind_{F}\left( \phi \right) \geq Ind_{F}\left( I\right)
\end{equation*}%
where $I$ is the identiy map of $M$.
\end{proposition}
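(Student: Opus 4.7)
The plan is to exhibit a natural injection from the variation space of $I$ (vector fields on $M$) into the variation space of $\phi$ (sections of $\phi^{-1}TN$) that carries negative directions of $Q_I^F$ to negative directions of $Q_\phi^F$. The obvious candidate is the tangent map itself, $\iota\colon v\mapsto d\phi(v)$; since $k\neq 0$ makes $\phi$ an immersion, $\iota$ is injective. If one can ensure that $Q_I^F(v)\leq 0$ forces $Q_\phi^F(d\phi(v))\leq 0$, then a subspace on which $Q_I^F$ is negative definite is carried to an equidimensional subspace on which $Q_\phi^F$ is negative definite, yielding $\mathrm{Ind}_F(\phi)\geq \mathrm{Ind}_F(I)$.

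The core step is to express $Q_\phi^F(d\phi(v))$ in terms of intrinsic data on $M$. I would view $\phi$ as an isometric immersion of $(M,k^2g)$ into $(N,h)$ with second fundamental form $\alpha$, so that $\tilde{\nabla}_X(d\phi(v))=d\phi(\nabla^M_X v)+\alpha(X,v)$. Since $|d\phi|^2=mk^2$ is constant, the $F$-harmonic equation combined with $F'>0$ forces $\phi$ to be harmonic, hence, being homothetic, minimal: $\sum_i\alpha(e_i,e_i)=0$. One then gets $\langle\tilde{\nabla}(d\phi(v)),d\phi\rangle=k^2\mathrm{div}(v)$ and $|\tilde{\nabla}(d\phi(v))|^2=k^2|\nabla v|^2+\sum_i|\alpha(e_i,v)|^2$, while Gauss' equation together with minimality yields $\sum_i\langle R^N(d\phi(v),d\phi(e_i))d\phi(e_i),d\phi(v)\rangle=k^2\mathrm{Ric}_M(v,v)+\sum_i|\alpha(e_i,v)|^2$. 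The $\alpha$-terms cancel in the combination $|\tilde{\nabla}V|^2-\sum_i\langle R^N(V,d\phi(e_i))d\phi(e_i),V\rangle$, and substitution into the second variation formula (\ref{4}) leaves
\[
Q_\phi^F(d\phi(v))=k^4F''(mk^2/2)\int_M(\mathrm{div}\,v)^2\,dv_g+k^2F'(mk^2/2)\int_M\bigl[|\nabla v|^2-\mathrm{Ric}_M(v,v)\bigr]\,dv_g,
\]
which has the same structure as $Q_I^F(v)$ in (\ref{7}).

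The main obstacle is precisely the cancellation of the second-fundamental-form contributions, which hinges squarely on the minimality forced by $F$-harmonicity; this is where the homothetic and $F$-harmonic hypotheses must be used together. A secondary delicate point is that $F'$ and $F''$ are evaluated at $mk^2/2$ rather than $m/2$, so the sign-matching is not automatic: in the harmonic case $F(t)=t$ the two forms are related by $Q_\phi^F(d\phi(v))=k^2Q_I^F(v)$, and in the $p$-energy case $F(t)=\frac{1}{p}(2t)^{p/2}$ one obtains the explicit identity $Q_\phi^F(d\phi(v))=k^p Q_I^F(v)$. For general $F$ the implication $Q_I^F(v)\leq 0\Rightarrow Q_\phi^F(d\phi(v))\leq 0$ should follow from the matching of coefficients of the two integrals combined with the positivity of $F'$ (possibly via an implicit compatibility between $F$ and the dilation factor $k$), and this is the step that requires the most care to complete the proof.
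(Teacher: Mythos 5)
Your strategy is the same as the paper's: restrict $Q_{\phi }^{F}$ to the tangential subspace $\Gamma ^{T}(\phi )=\{d\phi (X)\}$ and compare the restricted form with $Q_{I}^{F}$. Where the paper simply invokes Lemma 2.5 of \cite{15} together with the fact that $\nabla d\phi $ is normal-valued, you unpack that lemma via the Gauss equation and the minimality forced by $F$-harmonicity (legitimate, since $\left\vert d\phi \right\vert ^{2}=mk^{2}$ is constant and $F^{\prime }>0$); the cancellation of the second-fundamental-form terms is correct, and your resulting formula
\[
Q_{\phi }^{F}(d\phi (v))=k^{4}F^{\prime \prime }\left( \frac{mk^{2}}{2}\right) \int_{M}(\mathrm{div}\,v)^{2}\,dv_{g}+k^{2}F^{\prime }\left( \frac{mk^{2}}{2}\right) \int_{M}\left[ \left\vert \nabla v\right\vert ^{2}-\mathrm{Ric}_{M}(v,v)\right] dv_{g}
\]
is in fact more accurate than the paper's displayed one, which carries $k^{2}$ rather than $k^{4}$ on the $F^{\prime \prime }$ term.

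The gap is the final step, and you have named it yourself without closing it: nothing in your argument delivers the implication $Q_{I}^{F}(v)<0\Rightarrow Q_{\phi }^{F}(d\phi (v))\leq 0$ for a general $F$. Writing $A=\int_{M}(\mathrm{div}\,v)^{2}dv_{g}\geq 0$ and $B=\int_{M}[\left\vert \nabla v\right\vert ^{2}-\mathrm{Ric}_{M}(v,v)]dv_{g}$, you must pass from $F^{\prime \prime }(\frac{m}{2})A+F^{\prime }(\frac{m}{2})B<0$ to $k^{4}F^{\prime \prime }(\frac{mk^{2}}{2})A+k^{2}F^{\prime }(\frac{mk^{2}}{2})B\leq 0$, and since the pair $(k^{4}F^{\prime \prime }(\frac{mk^{2}}{2}),k^{2}F^{\prime }(\frac{mk^{2}}{2}))$ need not be a positive multiple of $(F^{\prime \prime }(\frac{m}{2}),F^{\prime }(\frac{m}{2}))$, the sign can flip (for instance when $F^{\prime \prime }(\frac{m}{2})\leq 0$ while $F^{\prime \prime }(\frac{mk^{2}}{2})$ is large and positive). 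Be aware that the paper closes this step only by asserting $Q_{\phi }^{F}(d\phi (X))=k^{2}Q_{I}^{F}(X)$, which tacitly identifies $F^{\prime }(\frac{mk^{2}}{2})$ with $F^{\prime }(\frac{m}{2})$ (and likewise for $F^{\prime \prime }$); that identification is exact only when $k^{2}=1$ or for homogeneous integrands such as $F(t)=t$ or $F(t)=\frac{1}{p}(2t)^{p/2}$, precisely the cases where your explicit identities $Q_{\phi }^{F}=k^{2}Q_{I}^{F}$ and $Q_{\phi }^{F}=k^{p}Q_{I}^{F}$ hold. So you have not overlooked an idea that the paper supplies; rather you have located a real defect, but as it stands your proposal does not prove the proposition, and no argument along these lines can do so without an extra hypothesis relating the derivatives of $F$ at $\frac{m}{2}$ and at $\frac{mk^{2}}{2}$.
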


\begin{proof}
The second variation of $\phi $ in direction of a vector field $v$ reduces to%
\begin{equation*}
Q_{\phi }^{F}\left( v\right) =F"(\frac{mk^{2}}{2})\int_{M}\left\langle
\nabla v,d\phi \right\rangle _{\phi ^{-1}\NEG{T}N}^{2}dv_{g}
\end{equation*}%
\begin{equation}
+F^{\prime }\left( \frac{mk^{2}}{2}\right) \int_{M}\left[ \left\vert \nabla
v\right\vert ^{2}-\sum_{i=1}^{m}\left\langle R^{N}\left( v,d\phi \left(
e_{i}\right) \right) d\phi \left( e_{i}\right) ,v\right\rangle \right] dv_{g}
\label{14}
\end{equation}%
where $\left\{ e_{i}\right\} _{1\leq i\leq m}$ is an orthonormal basis on $M$%
. Let $\Gamma ^{T}\left( \phi \right) $ the subspace of $\Gamma \left( \phi
^{-1}TN\right) $ , consisting of vector fields on $N$ of the form $d\phi
\left( X\right) $ where $X$ is a vector field on $M$. The restriction of $%
Q_{\phi }^{I}$ to $\Gamma ^{T}\left( \phi \right) $, where $I$ is the
identity map on $M$, is given by (see Lemma 2.5 \cite{15} ) 
\begin{equation}
Q_{\phi }^{I}\left( d\phi (X)\right) =k^{2}Q_{I}^{I}\left( X\right) \text{.}
\label{15}
\end{equation}%
As in \cite{15} and since $\nabla d\phi $ takes its value in the normal
fiber bundle of $N$, we get%
\begin{equation*}
\left\langle \nabla _{X}d\phi \left( Y\right) ,d\phi (Z)\right\rangle
=\left\langle \left( \nabla d\phi \right) \left( X,Y\right) ,Z\right\rangle
+\left\langle d\phi \left( \nabla _{X}Y\right) ,d\phi (Z)\right\rangle
\end{equation*}%
\begin{equation}
=k^{2}\left\langle \nabla _{X}Y,Z\right\rangle \text{.}  \label{16}
\end{equation}%
Replacing (\ref{16}) and (\ref{15}) in (\ref{14}) we deduce that%
\begin{equation*}
Q_{\phi }^{F}\left( d\phi (X)\right) =F"(\frac{mk^{2}}{2})k^{2}\int_{M}\left%
\langle \nabla _{e_{i}}X,e_{i}\right\rangle ^{2}dv_{g}+F^{\prime }\left( 
\frac{mk^{2}}{2}\right) k^{2}Q_{I}^{I}(X)
\end{equation*}%
\begin{equation*}
=k^{2}Q_{I}^{F}\left( X\right) \text{.}
\end{equation*}
\end{proof}

Propositions (\ref{prop1}) and (\ref{prop2}) lead to

\begin{corollary}
\label{cor} Let $\phi :\left( M,g\right) \rightarrow \left( N,h\right) $ be
an $F$- harmonic homothetic map. Suppose that Suppose that%
\begin{equation*}
\frac{m-2}{m}F^{\prime }\left( \frac{m}{2}\right) -F^{\prime \prime }\left( 
\frac{m}{2}\right) >0
\end{equation*}%
where $m=\dim (M)$ $\geq 3$.

Then 
\begin{equation*}
Ind_{F}\left( \phi \right) \geq \dim \left( C/K\right) .
\end{equation*}
\end{corollary}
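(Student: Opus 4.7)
The plan is to chain Propositions \ref{prop1} and \ref{prop2}. Proposition \ref{prop1} supplies, under the hypothesis $\frac{m-2}{m}F'(m/2) - F''(m/2) > 0$, a subspace of vector fields on $M$ of dimension $\dim(C/K)$ on which $Q_I^F$ is negative definite; concretely, I would fix once and for all a linear complement of $K$ inside $C$, which is finite dimensional on the compact manifold $M$, and use it as a lift of $C/K$ into $C$. Proposition \ref{prop2} then converts information about $Q_I^F$ into information about $Q_\phi^F$ through the scaling identity $Q_\phi^F(d\phi(X)) = k^2 Q_I^F(X)$ proved there.

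Next I would push this $\dim(C/K)$-dimensional subspace forward through $d\phi$ into $\Gamma(\phi^{-1}TN)$. Since $\phi$ is homothetic with $\phi^{\ast} h = k^2 g$, the differential $d\phi$ is fiberwise injective (the hypothesis that $\phi$ is a non-constant $F$-harmonic homothetic map forces $k \neq 0$), so the pushforward preserves linear independence and the image subspace has the same dimension $\dim(C/K)$. Combined with $k^2 > 0$, the scaling identity yields $Q_\phi^F(d\phi(X)) < 0$ on every nonzero $X$ in the chosen complement, so $Q_\phi^F$ is negative definite on the image. By the definition of the Morse index this gives $Ind_F(\phi) \geq \dim(C/K)$.

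There is no genuinely hard step in this argument: the corollary is essentially immediate from the two preceding propositions rather than requiring fresh analytic input. If there is any subtlety it lies in the bookkeeping between the quotient $C/K$ appearing in Proposition \ref{prop1} and the concrete subspace of $\Gamma(\phi^{-1}TN)$ that one exhibits for the index lower bound; but this is handled by a plain dimension count using the fiberwise injectivity of $d\phi$ together with the existence of a vector-space complement of $K$ in $C$, both of which are automatic.
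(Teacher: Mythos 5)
Your proposal is correct and follows exactly the paper's route: the paper derives the corollary immediately by combining Proposition \ref{prop1} (which gives $Ind_F(I)\geq\dim(C/K)$ under the stated sign condition) with Proposition \ref{prop2} (which gives $Ind_F(\phi)\geq Ind_F(I)$ via the identity $Q_\phi^F(d\phi(X))=k^2Q_I^F(X)$). Your additional bookkeeping about lifting $C/K$ to a complement of $K$ in $C$ and pushing it forward injectively through $d\phi$ is just an explicit unwinding of that same chain.
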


We can deduce an estimation to the $F$-index of an homothetic $F$-harmonic
from Theorem \ref{theo}.

Consider $\phi :\left( M,g\right) \rightarrow \left( S^{n},can\right) $ an
homothetic map i.e. $\phi ^{\ast }can=k^{2}g$, $k\in R$; where $S^{n}$
denotes the unit Euclidean $n$-dimensional sphere endowed with the canonical
metric $can$. The $F$-stress-energy tensor given by (\ref{4'}) writes 
\begin{equation*}
S_{g}^{F}(\phi )=F^{\prime }(\frac{\left\vert d\phi \right\vert ^{2}}{2}%
)\left\vert d\phi \right\vert ^{2}g-2\left( F^{\prime }(\frac{\left\vert
d\phi \right\vert ^{2}}{2})+F^{\prime \prime }\left( \frac{\left\vert d\phi
\right\vert ^{2}}{2}\right) \frac{\left\vert d\phi \right\vert ^{2}}{2}%
\right) \frac{\left\vert d\phi \right\vert ^{2}}{m}g
\end{equation*}%
\begin{equation*}
=\left( \left( 1-\frac{2}{m}\right) F^{\prime }(\frac{\left\vert d\phi
\right\vert ^{2}}{2})-\frac{\left\vert d\phi \right\vert ^{2}}{m}F^{\prime
\prime }\left( \frac{\left\vert d\phi \right\vert ^{2}}{2}\right) \right)
\left\vert d\phi \right\vert ^{2}g\text{.}
\end{equation*}%
So $S_{g}^{F}(\phi )$ will be positive defined if $\left( 1-\frac{2}{m}%
\right) F^{\prime }(\frac{\left\vert d\phi \right\vert ^{2}}{2})-\frac{%
\left\vert d\phi \right\vert ^{2}}{m}F^{\prime \prime }\left( \frac{%
\left\vert d\phi \right\vert ^{2}}{2}\right) >0$. As a consequence of
Theorem \ref{theo}, we have

\begin{proposition}
\label{prop3} Let $\phi $ be an homothetic $F$-harmonic map from a compact $%
m-$Riemannian manifold $\left( M,g\right) $ ($m\geq 3$) into the Euclidean
sphere $S^{n}$. Suppose that 
\begin{equation}
\left( 1-\frac{2}{m}\right) F^{\prime }(\frac{\left\vert d\phi \right\vert
^{2}}{2})-\frac{\left\vert d\phi \right\vert ^{2}}{m}F^{\prime \prime
}\left( \frac{\left\vert d\phi \right\vert ^{2}}{2}\right) >0\text{.}
\label{17}
\end{equation}%
Then the Morse index of $\phi $, $Ind_{F}(\phi )\geq n+1$.
\end{proposition}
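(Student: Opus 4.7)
The plan is to simply verify that the hypothesis $(\ref{17})$ is exactly the condition ensuring that the $F$-stress-energy tensor $S_g^F(\phi)$ is positive defined, and then invoke Theorem \ref{theo} directly.

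First I would recall that for a homothetic map $\phi:(M,g)\to(S^n,can)$ with $\phi^{\ast}can=k^{2}g$, one has $\phi^{\ast}can=\frac{|d\phi|^{2}}{m}g$ since $|d\phi|^{2}=mk^{2}$. Substituting this identity into the defining formula $(\ref{4'})$ for $S_g^F(\phi)$ yields
\begin{equation*}
S_{g}^{F}(\phi)=\left(\left(1-\frac{2}{m}\right)F^{\prime}\!\left(\frac{|d\phi|^{2}}{2}\right)-\frac{|d\phi|^{2}}{m}F^{\prime\prime}\!\left(\frac{|d\phi|^{2}}{2}\right)\right)|d\phi|^{2}\,g,
\end{equation*}
which is precisely the computation carried out in the paragraph preceding the proposition. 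Thus $S_g^F(\phi)$ is a pointwise multiple of the metric $g$.

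Next I would observe that for any unit tangent vector $X\in T_xM$, evaluating $S_g^F(\phi)(X,X)$ gives exactly the scalar coefficient in front of $g$. Consequently, under hypothesis $(\ref{17})$, namely $\left(1-\frac{2}{m}\right)F^{\prime}\!\left(\frac{|d\phi|^{2}}{2}\right)-\frac{|d\phi|^{2}}{m}F^{\prime\prime}\!\left(\frac{|d\phi|^{2}}{2}\right)>0$, one obtains $S_g^{o,F}(\phi)(x)>0$ at every point $x\in M$, so $S_g^F(\phi)$ is positive defined in the sense introduced just before Remark~1.

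Finally I would apply Theorem \ref{theo} to the $F$-harmonic map $\phi:(M,g)\to(S^n,can)$ whose stress-energy tensor has just been shown to be positive defined; the theorem yields $Ind_F(\phi)\geq n+1$, completing the proof. The argument contains no real obstacle: the only point requiring care is the algebraic simplification $\phi^{\ast}can=\frac{|d\phi|^{2}}{m}g$ for homothetic maps, which makes the two terms in $(\ref{4'})$ collapse onto a single multiple of $g$ and thereby reduces the positive-definiteness of $S_g^F(\phi)$ to the single scalar inequality $(\ref{17})$.
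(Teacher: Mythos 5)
Your proposal is correct and follows exactly the paper's own argument: the paragraph preceding Proposition \ref{prop3} performs the same substitution $\phi^{\ast}can=\frac{\left\vert d\phi \right\vert ^{2}}{m}g$ into (\ref{4'}), identifies the scalar coefficient, and concludes via Theorem \ref{theo}. Nothing to add.
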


\begin{remark}
The function $F(t)=\frac{m^{2}}{m-2}e^{\frac{m-2}{m^{2}}t}$, with $m\geq 3$
fulfills the condition (\ref{17}) for homothetic maps $\phi
:(M,g)\rightarrow (S^{n},can)$ i.e. $\phi ^{\ast }can=k^{2}g$ provided that $%
k^{2}<m$.
\end{remark}

\begin{remark}
The space $C$ of conformal vector fields on the unit Euclidean sphere $S^{n}$
is of dimension $\frac{1}{2}\left( n+1\right) \left( n+2\right) $\ and that
of Killing vector fields $K$ is of dimension $\frac{1}{2}n\left( n+1\right) $%
. Then $\dim (C/K)=n+1$. So we recover the result given by Corollary \ref%
{cor}.
\end{remark}

\end{document}